\documentclass[11pt,a4paper]{amsart}

\usepackage{cmap}
\usepackage[T1]{fontenc}

\usepackage{amsmath, amsfonts, amssymb, amsthm}
\usepackage{color}
\usepackage[pdftex]{graphicx}
\usepackage{thmtools}
\usepackage{thm-restate}

\usepackage{tikz}
\usetikzlibrary{matrix,arrows,decorations.pathmorphing}

\usepackage{hyperref}

\usepackage[utf8]{inputenc}
\usepackage[english]{babel}

\usepackage[mathscr,mathcal]{euscript}

\usepackage{csquotes}

\newcommand{\maintheoremref}{the Main Theorem}
\declaretheorem[name=Proposition,style=plain,numberwithin=section]{prop}
\declaretheorem[name=Lemma,style=plain,sibling=prop]{lem}

\declaretheorem[name=Definition,style=definition,sibling=prop]{defn}

\hypersetup{
    colorlinks=false,
    pdfborder={0 0 0},
}

\newcommand{\N}{\mathbb N}
\newcommand{\Z}{\mathbb Z}
\newcommand{\R}{\mathbb R}

\newcommand{\oTM}{\mathring TM}
\newcommand{\oTpM}[1]{\mathring T_{#1}M}

\newcommand{\im}{\operatorname{im}}
\newcommand{\codim}{\operatorname{codim}}
\newcommand{\id}{\operatorname{id}}
\newcommand{\dom}{\operatorname{dom}}
\newcommand{\tensorslot}{\,\cdot\,}

\newcommand{\fibprod}[1]{\times_{\!\scalebox{.6}{\(#1\)}}}

\newcommand{\nongen}[1]{\mathfrak N_{#1}}
\newcommand{\TLV}{(\oTM \fibprod{M} J^1L)^{\neq}}
\newcommand{\LLV}{(\oTM \fibprod{M} J^1L)^=}
\newcommand{\nongenneq}[1]{\nongen{#1}^{\neq}}
\newcommand{\nongenz}[1]{\nongen{#1}^=}

\newcommand{\quotient}[2]{\left. #1 \right/ #2}

\newcommand{\cwedge}{{~\wedge\!\!\!\!\!\bigcirc~}}

\newcommand{\doubleflat}[1]{#1^\flat \otimes #1^\flat}
\newcommand{\algcurv}{\mathcal R}

\title{Generic metrics satisfy the generic condition}
\author{Eric Larsson}
\address{
	Department of Mathematics\\
	KTH Royal Institute of Technology\\
	SE-100 44, Stockholm\\
	Sweden
}
\email[Eric Larsson]{ericlar@kth.se}

\begin{document}

\maketitle

\begin{abstract}
We prove that the \enquote{generic condition} used in singularity theorems of general relativity is generic in the space of Lorentzian metrics on a given manifold, in the sense that it is satisfied for all metrics in a residual set in the Whitney \(C^k\)-topology, for \(k\) depending on the dimension of the manifold.
\end{abstract}

\section{Introduction}
In singularity theorems, for instance the Hawking--Penrose singularity theorem \cite[Theorem 2, Chapter 8, page 266]{HawkingEllis}, a condition called the \emph{generic condition} is imposed on the spacetime.
In index notation, this condition states that for each inextendible timelike or lightlike geodesic \(\gamma\) there is some point \(\gamma(t)\) at which \(\dot\gamma^e\dot\gamma^f\dot\gamma_{[a}R_{b]ef[c}\dot\gamma_{d]} \neq 0\).
The condition can be written in index-free notation using the Kulkarni--Nomizu product, as we do in Definitions~\ref{def:Generic-vector} and \ref{def:Generic-condition-versions}.
For a general discussion of singularity theorems and the role of the generic condition, see \cite{Senovilla98}.

It has been proposed (see for instance \cite[page 101]{HawkingEllis}) that this \enquote{generic condition} should be \enquote{generic} (in some suitable sense) among Lorentzian metrics on a given manifold.
In a paper \cite{BeemHarris93a} called \enquote{The Generic Condition Is Generic} and a follow-up paper \cite{BeemHarris93b} Beem and Harris proved, among other things, that if there is a sufficiently large set of vectors \(X \in T_pM\) satisfying \(X^e X^f X_{[a} R_{b]ef[c} X_{d]} = 0\) then the curvature tensor at \(p\) has a very restricted form.
This suggests that violations of the generic condition should be rare.
However, this analysis concerns only a single point and does not in itself completely answer the question of whether the generic condition is generic in a more global sense.

We will prove that the set of metrics which satisfy the generic condition form a residual subset, in other words a countable intersection of dense open sets, of the space of all Lorentzian metrics on a fixed manifold, when this space is given the Whitney \(C^k\)-topology, for \(k\) depending on the dimension of the manifold.
The globalization of the argument is done by using a transversality theorem.
A similar method was used by Rendall in \cite{Rendall88a} and \cite{Rendall88b} to prove different genericity statements.

To express the generic condition in index-free notation we use the Kulkarni--Nomizu product (see \cite[Definition 1.110]{Besse}).
\begin{defn}
Let \(h\) and \(k\) be symmetric \(2\)-tensors.
Their \emph{Kulkarni--Nomizu product} \(h \cwedge k\) is defined by
\[\begin{aligned}
&(h \cwedge k)(x, y, u, v) \\
&\qquad= h(x, u)k(y, v) + h(y, v)k(x, u) - h(x, v)k(y, u) - h(y, u)k(x, v).
\end{aligned}\]
\end{defn}

\begin{defn}\label{def:Generic-vector}
Let \((M, g)\) be a Lorentzian manifold with \((0, 4)\) curvature tensor \(R\).
We say that a vector \(X \in TM\) is \emph{generic} if
\[(\doubleflat{X}) \cwedge R(\tensorslot, X, \tensorslot, X) \neq 0.\]
In index notation this condition reads \(X^e X^f X_{[a} R_{b]ef[c} X_{d]} \neq 0\), so our definition is equivalent to the one used in \cite{BeemHarris93a} and \cite{BeemHarris93b}.
\end{defn}

\begin{defn}
Let \((M, g)\) be a Lorentzian manifold with \((0, 4)\) curvature tensor \(R\).
Let \(X\) be a vector and let \(r\) be a nonnegative integer.
Let \(\gamma\) be a geodesic segment with \(\dot \gamma(0) = X\).
We say that the vector \(X\) is \emph{\(r\)-nongeneric} if for all integers \(k \in [0, r]\) it holds that
\[
\nabla^k_{\dot\gamma(0)} \left((\doubleflat{\dot\gamma}) \cwedge R(\tensorslot, \dot\gamma, \tensorslot, \dot\gamma)\right) = 0.
\]
\end{defn}

\begin{defn}\label{def:Generic-condition-versions}
A Lorentzian manifold satisfies the \emph{timelike generic condition} if every inextendible timelike geodesic \(\gamma\) has some point at which \(\dot\gamma\) is generic.
The \emph{lightlike generic condition} and \emph{spacelike generic condition} are defined analogously.

We say that a Lorentzian manifold satisfies the \emph{generic condition} if it satisfies the timelike and lightlike generic conditions.
\end{defn}

We will show that each of these generic conditions is generic in the space of all Lorentzian metrics, in the sense made precise in the following theorem.
\begin{restatable}[Generic metrics satisfy the generic condition]{thm}{mainthm}
Let \(M\) be a smooth manifold of dimension \(n \geq 3\).
Let \(r\) be an integer such that
\[
r > \frac{4n - 2}{(n-1)(n-2)}.
\]
Let \(k \in \Z \cup \{\infty\}\) be such that \(k \geq r + 2\).
Let \(L\) denote the fiber bundle of inner products of Lorentzian signature on \(TM\) and endow the set \(\Gamma^\infty(L)\) of smooth sections of \(L\) (i.e.\ the set of Lorentzian metrics on \(M\)) with the Whitney \(C^k\)-topology.
Then there is a residual and dense set \(G \subseteq \Gamma^\infty(L)\) such that if \(g \in G\) then no nonzero vector is \(r\)-nongeneric in \(g\).

This implies that if \(g \in G\) then the points at which \((\doubleflat{\dot\gamma}) \cwedge R(\tensorslot, \dot\gamma, \tensorslot, \dot\gamma)\) is zero along any \(g\)-geodesic \(\gamma\) form a discrete set.
In particular,
\begin{itemize}
\item the lightlike generic condition holds for each metric \(g \in G\),
\item the timelike generic condition holds for each metric \(g \in G\),
\item the spacelike generic condition holds for each metric \(g \in G\).
\end{itemize}
\end{restatable}
The theorem tells us that for generic metrics, each geodesic has a dense subset where the tangent vectors are generic.
This statement is much stronger than the generic condition, which demands only that this holds at at least one point along the geodesic, not on a dense set.
The methods we will use to prove the theorem are well-suited for obtaining properties on dense sets, but do not capture the concept of \enquote{at least one point}.

It is also worth noting that the theorem tells us that the generic condition is generic in the space of all Lorentzian metrics.
It would perhaps be more interesting to be able to prove that the generic condition is generic in a set of Lorentzian metrics satisfying some additional condition, for instance that of being Ricci flat.
To obtain such a theorem, one might need to adapt the methods to capture the notion of \enquote{at least one point}.
This global property of the generic condition is what makes it difficult to work with, and by proving a much stronger conclusion we sidestep this problem altogether.

The method we will use to prove the theorem is inspired by the method used by Rendall in \cite{Rendall88a} and \cite{Rendall88b}.

\subsection{Notation}
Throughout the paper, \(M\) will denote a smooth manifold of dimension \(n\).
The fiber product of fiber bundles over \(M\) will be denoted by \(\fibprod{M}\).
The \(r\)-fold Whitney sum \(E \oplus E \oplus \cdots \oplus E\) of a vector bundle \(E\) over \(M\) will be denoted by \(E^{\oplus r}\).
We will use \(\oTM\) to denote the tangent bundle without its zero section.
In other words, its fiber \(\oTpM{p}\) over \(p\) is \(T_pM \setminus \{0\}\).
The fiber bundle of inner products of Lorentzian signature on \(TM\) will be denoted by \(L \to M\).
The space of smooth sections of \(L\), in other words the space of smooth Lorentzian metrics on \(M\), will be denoted by \(\Gamma^\infty(L)\).
The \(k\)-jet bundle of \(L\), as described in \autoref{app:jets-and-whitney-topologies}, will be denoted by \(J^kL\), and the \(k\)-jet of a metric \(g\) evaluated at \(p \in M\) will be denoted by \(j^k_pg\).

\section{Surjectivity of the curvature computation map}
In this section we will show that the map which computes a curvature tensor and its derivatives (in a fixed direction) from a metric and its derivatives is a submersion.
This property is used in \autoref{sec:transversality} to determine the codimensions of the inverse images of manifolds constructed in \autoref{sec:nongenericity} to encode nongenericity of vectors.
We begin with a lemma which reduces the submersivity of a fiber bundle map to submersivity of its restrictions to individual fibers.
\begin{lem}\label{lem:submersivity-lemma}
Let \(B\) and \(B'\) be smooth manifolds.
Let \(E \xrightarrow{\pi} B\) and \(E' \xrightarrow{\pi'} B'\) be smooth fiber bundles.
Let \(\Phi \colon E \to E'\) be a smooth bundle map which projects to a smooth map \(\phi \colon B \to B'\).
Suppose that \(\phi\) is a submersion and that \(\Phi\) is a fiberwise submersion (in the sense that for each \(b \in B\) the fiberwise restriction $\Phi_b \colon E_b \to E'_{\phi(b)}$ is a submersion).
Then \(\Phi\) is a submersion.
\end{lem}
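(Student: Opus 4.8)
The plan is to verify surjectivity of \(d\Phi_e\) at each point \(e \in E\) by a diagram chase that exploits the vertical–horizontal structure coming from the two bundle projections. Write \(b = \pi(e)\), \(e' = \Phi(e)\), and \(b' = \pi'(e') = \phi(b)\). The differentials of \(\pi\) and \(\pi'\) fit into short exact sequences of tangent spaces
\[
\begin{gathered}
0 \to T_e E_b \to T_e E \xrightarrow{d\pi_e} T_b B \to 0, \\
0 \to T_{e'}E'_{b'} \to T_{e'}E' \xrightarrow{d\pi'_{e'}} T_{b'}B' \to 0,
\end{gathered}
\]
where \(T_e E_b\) denotes the tangent space at \(e\) to the fiber \(E_b = \pi^{-1}(b)\), and similarly for \(E'\). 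These sequences are exact because \(\pi\) and \(\pi'\) are submersions with fibers \(E_b\) and \(E'_{b'}\); in particular \(d\pi_e\) is surjective, which will be used below.

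The first thing I would establish is that \(d\Phi_e\) is a morphism between these two sequences. Since \(\Phi\) is a bundle map projecting to \(\phi\), we have \(\pi' \circ \Phi = \phi \circ \pi\), and \(\Phi\) carries the fiber \(E_b\) into \(E'_{b'}\). Differentiating the relation \(\pi' \circ \Phi = \phi \circ \pi\) gives \(d\pi'_{e'} \circ d\Phi_e = d\phi_b \circ d\pi_e\), while the inclusion \(\Phi(E_b) \subseteq E'_{b'}\) shows that \(d\Phi_e\) maps the vertical subspace \(T_e E_b\) into \(T_{e'}E'_{b'}\), its restriction there being exactly the fiberwise differential \(d(\Phi_b)_e\). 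Thus \(d\Phi_e\) induces \(d(\Phi_b)_e\) on the left (vertical) terms and \(d\phi_b\) on the right (base) terms, and by hypothesis both of these induced maps are surjective.

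Surjectivity of \(d\Phi_e\) then follows by the short five lemma, which I would spell out explicitly. Given \(w \in T_{e'}E'\), first use surjectivity of \(d\phi_b\) to pick \(v \in T_b B\) with \(d\phi_b(v) = d\pi'_{e'}(w)\), then use surjectivity of \(d\pi_e\) to lift \(v\) to \(u \in T_e E\) with \(d\pi_e(u) = v\). Commutativity gives \(d\pi'_{e'}(w - d\Phi_e(u)) = d\pi'_{e'}(w) - d\phi_b(v) = 0\), so \(w - d\Phi_e(u)\) lies in the vertical subspace \(T_{e'}E'_{b'}\). Surjectivity of \(d(\Phi_b)_e\) onto that subspace then yields \(z \in T_e E_b\) with \(d\Phi_e(z) = w - d\Phi_e(u)\), and \(x = u + z\) satisfies \(d\Phi_e(x) = w\). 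As \(e\) was arbitrary, \(\Phi\) is a submersion.

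This argument is almost entirely formal, so I do not expect a genuine obstacle; the only point requiring care is the bookkeeping of the two short exact sequences, specifically verifying that \(d\Phi_e\) sends vertical vectors to vertical vectors and that its vertical restriction coincides with \(d(\Phi_b)_e\), so that the fiberwise-submersion hypothesis feeds correctly into the chase. One could instead work in adapted local trivializations \(E|_U \cong U \times F\) and \(E'|_{U'} \cong U' \times F'\) with \(\phi(U) \subseteq U'\), in which \(\Phi\) takes the form \((b, f) \mapsto (\phi(b), \Psi(b, f))\) and its Jacobian is block lower-triangular with surjective diagonal blocks \(d\phi_b\) and \(\partial_f \Psi\); but the coordinate-free chase is cleaner and avoids choosing trivializations.
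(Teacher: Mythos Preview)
Your proof is correct. The diagram chase is clean, and the key verification that \(d\Phi_e\) restricts to \(d(\Phi_b)_e\) on vertical subspaces is exactly what makes the fiberwise hypothesis usable; the explicit five-lemma argument leaves no gaps.

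The paper takes the alternative route you mention at the end: it chooses local trivializations so that \(\Phi(b,f) = (\phi(b), \Phi_b(f))\), and then argues directly that the image of the tangent map contains both the vertical subspace \(\{0\}\times T_{\Phi_b(f)}F'\) (from the fiberwise submersion) and a complementary subspace of dimension \(\dim B'\) (from \(\phi\) being a submersion). Your coordinate-free argument avoids the bookkeeping of choosing compatible trivializations and shrinking neighborhoods, and makes the structural reason for the result (a map of short exact sequences with surjective outer maps) more transparent; the paper's version is perhaps more immediately visual for readers who think in block-triangular Jacobians. Either way the content is the same, and you correctly anticipated both approaches.
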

\begin{proof}
Let \(F\) and \(F'\) be spaces which are diffeomorphic to the fibers of \(E\) and \(E'\).
Choose some point $e \in E$, and let $b = \pi(e)$.
We will show that \(\Phi\) is submersive at \(e\).
Choose an open trivializing neighborhood \(p \in U \subseteq B\) for \(E\).
Let \(U' = \phi(U)\).
The map \(\phi\) is a submersion, and hence open, so \(U'\) is an open neighborhood of \(\phi(p)\) in \(B'\).
After possibly shrinking \(U\), the set \(U'\) is a trivializing neighborhood for \(E'\).
With respect to these trivializations, the map \(\Phi\) can be written as
\[
\Phi(b, f)
=
(\phi(b), \Phi_b(f)).
\]
Since \(\Phi_b\) is a submersion, the image of the tangent map of this map contains the linear subspace \(\{0\} \times T_{\Phi_b(f)}F' \subseteq T_{\phi(b)}B' \times T_{\Phi_b(f)}F'\).
Since \(\phi\) is a submersion, it also contains a linear subspace of dimension \(\dim B'\) which intersects \(\{0\} \times T_{\Phi_b(f)}F'\) only in \(0\).
Hence the tangent map is surjective, proving that \(\Phi\) is submersive at \(e\).
\end{proof}

We now turn to the definition of maps \(\alpha_r\) which compute curvature tensors and their derivatives.
\begin{defn}
Given a smooth manifold \(M\), let \(\algcurv \to M\) be the vector bundle of tensors with the symmetries of Riemann curvature tensors, in other words the vector bundle of \((0, 4)\)-tensors \(R\) such that
\[R(x, y, v, w) = -R(y, x, v, w),\]
\[R(x, y, v, w) = -R(x, y, w, v),\]
\[R(x, y, v, w) + R(y, v, x, w) + R(v, x, y, w) = 0.\]
\end{defn}

\begin{defn}
For each \(r \in \N\), we will define a map
\[
\alpha_r \colon
	\oTM \fibprod{M} J^{1+r}L
\to
	\oTM \fibprod{M} J^1L \fibprod{M} \algcurv^{\oplus r}.
\]
Here \(\oTM\) denotes the tangent bundle without its zero section.
For \(r = 0\), define \(\alpha_0 \colon \oTM \fibprod{M} J^1L \to \oTM \fibprod{M} J^1L\) to be the identity.
For \(r \geq 1\), proceed as follows.
For \((X, [g]) \in \oTM \fibprod{M} J^{1+r}L\), based at a point \(p \in M\), let \(g\) be a representative of \([g]\).
Explicitly, \(g\) is a Lorentzian metric on a neighborhood of \(p\).
Extend \(X\) to a parallel vector field on the \(g\)-geodesic starting with \(X\).
Let \(R\) be the curvature tensor of \(g\).
For \(r \geq 1\), define
\[
\alpha_r(X, [g])
=
(X, j^1_pg, R, \nabla_X R, \nabla^2_X R, \ldots, \nabla^{r-1}_X R)
\]
(where \(R\) and its covariant derivatives are evaluated at \(p\)).
The maps \(\alpha_r\) are well-defined since the curvature tensor and its derivatives of orders up to \(r-1\) along a geodesic starting with \(X\) can be computed pointwise in terms of \(X\) and the derivatives of the metric of orders up to \(r + 1\).
It is smooth since \(R\) and its derivatives depend smoothly on \(X\) and \(g\).
\end{defn}

\begin{lem}\label{lem:alpha-submersion}
For each \(r \in \N\), the map \(\alpha_r\) is a submersion.
\end{lem}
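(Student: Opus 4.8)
The plan is to invoke \autoref{lem:submersivity-lemma} with a well-chosen common base and then reduce the fiberwise submersivity to an algebraic surjectivity statement about Kulkarni--Nomizu products. I would take \(B = B' = \oTM \fibprod{M} J^1L\), regard \(E = \oTM \fibprod{M} J^{1+r}L\) as a bundle over \(B\) via the jet truncation \(J^{1+r}L \to J^1L\), and regard \(E' = \oTM \fibprod{M} J^1L \fibprod{M} \algcurv^{\oplus r}\) as a bundle over \(B\) by forgetting the \(\algcurv^{\oplus r}\) factor. Since \(\alpha_r\) preserves \(X\) and the \(1\)-jet \(j^1_pg\), it covers the map \(\phi = \id_B\), which is trivially a submersion. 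By \autoref{lem:submersivity-lemma} it then suffices to prove that for each fixed \((X, j^1_pg) \in B\) the fiberwise map is a submersion.

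Next I would analyze this fiberwise map in coordinates centered at \(p\). The fiber of \(E\) is an affine space whose free coordinates are the higher derivatives \(\partial^2 g, \partial^3 g, \ldots, \partial^{1+r} g\) of the metric (the value and first derivatives being fixed in the base), while the fiber of \(E'\) is \(\algcurv^{\oplus r}\). The essential structural fact is that \(R\) is affine in \(\partial^2 g\), and more generally \(\nabla^k_X R\) is affine in \((\partial^2 g, \ldots, \partial^{k+2} g)\), with every lower-order contribution built from the fixed quantities \(g\) and \(\partial g\) (Christoffel symbols) times lower derivatives. Consequently the fiberwise map is affine, and the linear part of its differential is block lower-triangular with respect to the filtration by derivative order: the output \(\nabla^k_X R\) depends only on the inputs \(\partial^2 g, \ldots, \partial^{k+2} g\). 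For such a block-triangular linear map surjectivity follows by solving for the inputs one derivative order at a time, so it remains to show that each diagonal block, the leading symbol map \(\partial^{k+2} g \mapsto \nabla^k_X R\), is surjective onto \(\algcurv\) for \(k = 0, \ldots, r-1\).

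The heart of the argument is therefore the case \(k = 0\) together with a reduction of general \(k\) to it. At leading order \(\nabla^k_X\) acts as \((X\cdot\partial)^k\) and the second-derivative part of \(R\) is the linearized curvature, so evaluating on a perturbation whose \((k+2)\)-th derivative has the rank-one form \(\partial_{a_1}\cdots\partial_{a_{k+2}} h = \xi_{a_1}\cdots\xi_{a_{k+2}} h\) yields the symbol
\[
\nabla^k_X R \;=\; \tfrac12\,\xi(X)^k\,(\xi\otimes\xi) \cwedge h .
\]
Choosing \(\xi\) with \(\xi(X) \neq 0\) makes the scalar nonzero, and such covectors form the complement of a hyperplane, so the tensors \(\xi\otimes\xi\) with \(\xi(X)\neq 0\) still span \(S^2 T_p^*M\). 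Hence surjectivity of every diagonal block reduces to the single statement that Kulkarni--Nomizu products \(p \cwedge q\) of symmetric \(2\)-tensors span the space \(\algcurv\) of algebraic curvature tensors.

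This spanning statement is the main obstacle and the only genuinely nontrivial input. I would establish it by noting that the symmetric bilinear Kulkarni--Nomizu map descends to a \(GL(T_pM)\)-equivariant map \(S^2(S^2 T_p^*M) \to \algcurv\); since \(\algcurv\) is the irreducible component of \(S^2(S^2 T_p^*M)\) of Young symmetry type \((2,2)\) and occurs there with multiplicity one, Schur's lemma forces this map to be a nonzero scalar multiple of the projection onto that component, hence surjective (nonvanishing being clear from \(g \cwedge g \neq 0\)). Feeding this back up the chain, surjective diagonal blocks give a surjective triangular linear part, hence an affine fiberwise submersion, and then \autoref{lem:submersivity-lemma} shows that \(\alpha_r\) is a submersion.
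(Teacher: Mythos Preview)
Your overall strategy is sound and does prove the lemma, but one claim is not quite right: the fiberwise map over \((X,j^1_pg)\) is \emph{not} affine in \((\partial^2 g,\ldots,\partial^{r+1}g)\) once \(r\geq 3\). For instance, the coordinate expression for \(\nabla^2_X R\) contains terms of the schematic form \(g\,(\partial\Gamma)(\partial\Gamma)\), which are quadratic in \(\partial^2 g\). What \emph{is} true, and is all you actually use, is that \(\nabla^k_X R\) depends only on \(\partial^2 g,\ldots,\partial^{k+2}g\) and that its dependence on the top order \(\partial^{k+2}g\) is linear, with linear part given by the symbol you write down. That already makes the differential at every point block lower-triangular with \emph{constant} surjective diagonal blocks, and your back-substitution argument goes through unchanged. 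So the fix is just to replace ``affine'' by ``polynomial with linear top-order part''.

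The paper's proof is organized differently. It runs an induction on \(r\), viewing \(\alpha_r\) as a bundle map over \(\alpha_{r-1}\) for the fibrations \(J^{1+r}L\to J^rL\) and \(\algcurv^{\oplus r}\to\algcurv^{\oplus(r-1)}\); this isolates a single fiber \(S_2\otimes S_{r+1}\) at each step, on which the map \emph{is} genuinely affine (this is where your affine intuition is correct), and surjectivity is shown by the explicit right inverse \(Q_{abcd1\cdots1}=-\tfrac13(\hat P_{acbd}+\hat P_{adbc})\). Your route packages the same triangular structure without induction and replaces the explicit preimage by the more conceptual statement that Kulkarni--Nomizu products span \(\algcurv\), proved via the \(GL\)-decomposition \(S^2(S^2)\cong S^4\oplus S^{(2,2)}\) and Schur's lemma. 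The paper's argument is shorter and entirely elementary; yours explains \emph{why} the symbol is surjective in representation-theoretic terms and would generalize more readily to other curvature-type operators, at the cost of invoking the plethysm and needing a word on why the rank-one tensors \(\xi\otimes\xi\) with \(\xi(X)\neq 0\) still span \(S^2T_p^*M\).
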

\begin{proof}
The map \(\alpha_0\) is the identity, which is a submersion.
For \(r \geq 1\) we proceed by induction.
Suppose that \(\alpha_{r-1}\) is a submersion.
Consider the fiber bundles
\[
\oTM \fibprod{M} J^{1+r}L \to \oTM \fibprod{M} J^rL,
\]
\[\oTM \fibprod{M} J^1L \fibprod{M} \algcurv^{\oplus r} \to \oTM \fibprod{M} J^1L \fibprod{M} \algcurv^{\oplus (r-1)}.\]
We can use \(\alpha_r\) and \(\alpha_{r-1}\) to obtain a fiber bundle map
\begin{center}
\begin{tikzpicture}[scale=1.4]
\node (J1rL) at (0,1) {\(\oTM \fibprod{M} J^{1+r}L\)};
\node (JrL) at (0,0) {\(\oTM \fibprod{M} J^rL\)};
\node (Rr) at (4,1) {\(\oTM \fibprod{M} J^1L \fibprod{M} \algcurv^{\oplus r}\)};
\node (Rrm1) at (4,0) {\(\oTM \fibprod{M} J^1L \fibprod{M} \algcurv^{\oplus (r-1)}\).};
\path[->,>=angle 90]
(J1rL) edge node[above]{\(\alpha_r\)} (Rr)
(JrL) edge node[above]{\(\alpha_{r-1}\)} (Rrm1)
(J1rL) edge node[above]{} (JrL)
(Rr) edge node[above]{} (Rrm1);
\end{tikzpicture}
\end{center}
The map \(\alpha_{r-1}\) is a submersion by the induction hypothesis, so if we can show that \(\alpha_r\) is a fiberwise submersion then \autoref{lem:submersivity-lemma} will tell us that it is a submersion.
Fix some \(q \in \oTM \fibprod{M} J^rL\).
We will show that the restriction of \(\alpha_r\) to the fiber \((\oTM \fibprod{M} J^{1+r}L)_q\) over \(q\) is a submersion.

\newcommand{\dindexlist}{i_1 \cdots i_{r-1}}
An element of \((\oTM \fibprod{M} J^{1+r}L)_q\) corresponds to a tensor which in coordinates can be suggestively denoted by \(g_{ab,cd \dindexlist}\), since it is the collection partial derivatives of order \(r+1\) of a metric in coordinates.
More formally, the fiber \((\oTM \fibprod{M} J^{1+r}L)_q\) is isomorphic as a vector space to the space \(S_2 \otimes S_{r+1}\) of \((0, 3+r)\)-tensors which are symmetric in the first two arguments and in the last \(r+1\) arguments.\footnote{Here we have used \(S_k\) to denote the space of completely symmetric \((0, k)\)-tensors on the tangent space \(T_{\pi(q)}M\) where \(\pi \colon \oTM \fibprod{M} J^rL \to M\) is the fiber bundle projection.}
The coordinate expression for the curvature tensor in terms of the metric and Christoffel symbols is
\[
R_{abcd}
=
\frac{1}{2}\left(
	g_{ad,bc} + g_{bc,ad} - g_{ac,bd} - g_{bd,ac}
\right)
+
g_{np} \left(
	\Gamma^n_{bc} \Gamma^p_{ad} - \Gamma^n_{bd} \Gamma^p_{ac}
\right).
\]
This means that the coordinate expression for \(\nabla^{r-1}_XR\) is
\[\begin{aligned}
(\nabla^{r-1}_XR)_{abcd}
&=
X^{i_1}X^{i_2} \cdots X^{i_{r-1}} R_{abcd;\dindexlist}
\\
&\,
\begin{aligned}
= X^{i_1} \cdots X^{i_{r-1}}
\frac{1}{2}\big(
	&g_{ad,bc \dindexlist} + g_{bc,ad \dindexlist}\\
- &g_{ac,bd \dindexlist} - g_{bd,ac \dindexlist}
\big)
\end{aligned}\\
&\qquad + C(q)
\end{aligned}\]
where \(C(q) \in \algcurv\) depends only on \(X\) and the derivatives of \(g\) of order at most \(r\), in other words \(C(q)\) depends only on \(q\).
Without loss of generality, we may work in coordinates where \(X^1 = 1\) and \(X^i = 0\) for \(i \neq 1\).
Then
\renewcommand{\dindexlist}{1 \cdots 1}
\[
(\nabla^{r-1}_XR)_{abcd}
=
\frac{1}{2}\left(
	g_{ad,bc \dindexlist} + g_{bc,ad \dindexlist} - g_{ac,bd \dindexlist} - g_{bd,ac \dindexlist}
\right)
+
C(q).
\]
Hence it holds for \(Q \in (\oTM \fibprod{M} J^{1+r}L)_q\) that
\[
\alpha_r(Q)
=
\frac{1}{2}\left(
	Q_{adbc \dindexlist} + Q_{bcad \dindexlist} - Q_{acbd \dindexlist} - Q_{bdac \dindexlist}
\right)
+
C(q).
\]
This is a linear map from \(S_2 \otimes S_{r+1}\) to \(\algcurv\).
Our goal is to show that it is a submersion, which by linearity is equivalent to it being surjective.
Choose \(P \in \algcurv\), let \(\hat P = P - C(q)\) and let \(Q \in S_2 \otimes S_{r+1} \cong (\oTM \fibprod{M} J^{1+r}L)_q\) be such that
\[
Q_{abcd \dindexlist}
=
-\frac{1}{3}\left(
	\hat P_{acbd} + \hat P_{adbc}
\right)\!.
\]
Then a computation involving the symmetries of \(\hat P \in \algcurv\) shows that
\[
\alpha_r(Q) = \hat P + C(q) = P,
\]
proving that the restriction of \(\alpha_r\) to the fiber \((\oTM \fibprod{M} J^{1+r}L)_q\) is a submersion.
By \autoref{lem:submersivity-lemma} and the induction hypothesis, \(\alpha_r\) is then a submersion, completing the proof.
\end{proof}

\section{Manifolds encoding nongenericity}\label{sec:nongenericity}
We will now construct a set which encodes nongenericity of vectors in different metrics.
Then, we will show that its intersection with the spaces of timelike and lightlike vectors are smooth manifolds, and compute the dimensions of these intersections.
\begin{defn}
For each \(r \in \N\) let \(\nongen{r} \subseteq \oTM \fibprod{M} J^1L \fibprod{M} \algcurv^{\oplus r}\) be the set of elements \((X, [g], R_0, R_1, \ldots R_{r-1})\) such that for each \(i \in \{0, 1, \ldots, r-1\}\) and all \(A, B \in T_pM\) which are \([g]\)-orthogonal to \(X\) it holds that
\[R_i(A, X, B, X) = 0.\]
Here \(T_pM\) is the fiber to which \(X\) belongs.
\end{defn}
The purpose of these sets is shown in the following lemma.
\begin{lem}
Let \((M, g)\) be a Lorentzian manifold.
Then a nonzero vector \(X \in \oTpM{p}\) is \(r\)-nongeneric if and only if \(\alpha_{r+1}(X, j^{r+2}_pg) \in \nongen{r+1}\).
\end{lem}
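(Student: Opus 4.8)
The plan is to split the equivalence into an analytic reduction to a single point, followed by a pointwise algebraic lemma.

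First I would exploit that \(\gamma\) is a geodesic. Writing \(S\) for the tensor field \((\doubleflat{\dot\gamma}) \cwedge R(\tensorslot, \dot\gamma, \tensorslot, \dot\gamma)\) along \(\gamma\), the key observation is that \(\dot\gamma\) is parallel, hence so is \(\dot\gamma^\flat\) by metric compatibility. Since the Kulkarni--Nomizu product is a fixed bilinear algebraic operation and \(\nabla_{\dot\gamma}\) is a tensor derivation, the Leibniz rule gives \(\nabla_{\dot\gamma}(h \cwedge k) = (\nabla_{\dot\gamma} h) \cwedge k + h \cwedge (\nabla_{\dot\gamma} k)\). Applying this with \(h = \doubleflat{\dot\gamma}\) (whose covariant derivatives along \(\gamma\) all vanish) and \(k = R(\tensorslot, \dot\gamma, \tensorslot, \dot\gamma)\), and using \(\nabla_{\dot\gamma}\dot\gamma = 0\) to move each derivative onto \(R\), an easy induction yields
\[
\nabla^k_{\dot\gamma} S = (\doubleflat{\dot\gamma}) \cwedge \left[ (\nabla^k_{\dot\gamma} R)(\tensorslot, \dot\gamma, \tensorslot, \dot\gamma) \right].
\]
Evaluating at the basepoint \(p\), where \(\dot\gamma(0) = X\), and recalling that \(\nabla^k_X R\) in the definition of \(\alpha_{r+1}\) is exactly \(\nabla^k_{\dot\gamma} R\) at \(p\), this reduces the whole question to the pointwise statement that \((\doubleflat{X}) \cwedge [(\nabla^k_X R)(\tensorslot, X, \tensorslot, X)] = 0\) for each \(k \in \{0, \ldots, r\}\) if and only if \((\nabla^k_X R)(A, X, B, X) = 0\) for all \(A, B\) orthogonal to \(X\) and each such \(k\).

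The heart of the argument is then a purely algebraic lemma: for a nonzero covector \(\ell\) and a symmetric \((0,2)\)-tensor \(m\), one has \((\ell \otimes \ell) \cwedge m = 0\) if and only if \(m\) vanishes on \(\ker \ell \times \ker \ell\). I would apply this with \(\ell = X^\flat\) (nonzero since \(g\) is nondegenerate and \(X \neq 0\)) and \(m = (\nabla^k_X R)(\tensorslot, X, \tensorslot, X)\), which is symmetric by the pair symmetry of tensors in \(\algcurv\); since \(\ker X^\flat\) is precisely the orthogonal complement of \(X\), this gives exactly the desired pointwise equivalence. For the forward direction, evaluating \((\ell \otimes \ell) \cwedge m\) on \((N, A, N, B)\) with \(A, B \in \ker\ell\) and \(\ell(N) \neq 0\) collapses all but one term and leaves \(\ell(N)^2 m(A,B)\), forcing \(m(A,B) = 0\). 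For the converse, I would show that any symmetric \(m\) vanishing on \(\ker\ell \times \ker\ell\) can be written as \(\ell \otimes \beta + \beta \otimes \ell\) for a suitable covector \(\beta\) (explicitly \(\beta = m(N, \tensorslot) - \tfrac{1}{2} m(N,N)\ell\) after normalizing \(\ell(N) = 1\)), and that a direct expansion shows \((\ell\otimes\ell)\cwedge(\ell\otimes\beta + \beta\otimes\ell) = 0\) by termwise cancellation.

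The main obstacle is that this algebraic lemma must hold uniformly across all causal types of \(X\), and in particular for lightlike \(X\), where \(X\) itself lies in \(X^\perp = \ker X^\flat\) and the orthogonal complement is a degenerate hyperplane. This rules out the naive approach of splitting \(T_pM = \R X \oplus X^\perp\), which is available only when \(g(X,X) \neq 0\). The decomposition \(m = \ell \otimes \beta + \beta \otimes \ell\) is what makes the argument signature-independent and sidesteps any case distinction. Once the lemma is in hand, the conclusion follows by matching indices: \(\alpha_{r+1}(X, j^{r+2}_p g) = (X, j^1_p g, \nabla^0_X R, \ldots, \nabla^r_X R)\) lies in \(\nongen{r+1}\) precisely when the pointwise condition holds for every \(k \in \{0, \ldots, r\}\), which by the two steps above is exactly \(r\)-nongenericity of \(X\).
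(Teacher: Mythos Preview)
Your argument is correct and follows the same two-step structure as the paper: first reduce to a pointwise condition using \(\nabla_{\dot\gamma}\dot\gamma = 0\), then invoke an algebraic equivalence between \((\doubleflat{X}) \cwedge m = 0\) and the vanishing of \(m\) on \(X^\perp \times X^\perp\). The only substantive difference is that the paper outsources the algebraic step to \cite[Proposition 2.2]{BeemHarris93a}, whereas you prove it directly via the decomposition \(m = \ell \otimes \beta + \beta \otimes \ell\). Your direct argument is a genuine improvement in self-containment: it handles all causal types of \(X\) uniformly without needing to consult the cited reference, and the explicit cancellation in the Kulkarni--Nomizu product makes the lightlike case transparent rather than a separate verification. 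The paper's version is shorter on the page but relies on the reader trusting (or looking up) the Beem--Harris computation; yours costs a paragraph but leaves nothing to external sources.
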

\begin{proof}
Let \(\gamma \colon (-a, a) \to M\) be a segment of the geodesic with initial velocity \(\dot \gamma(0) = X\).
That \(X\) is \(r\)-nongeneric is by definition the statement that
\[
\nabla^k_{\dot\gamma(0)} \left((\doubleflat{\dot\gamma}) \cwedge R(\tensorslot, \dot\gamma, \tensorslot, \dot\gamma)\right) = 0.
\]
for \(k \in \{0, 1, \ldots, r\}\).
Since \(\gamma\) is a geodesic we have \(\nabla_{\dot\gamma} \dot\gamma = 0\), so it holds that
\[\begin{aligned}
&\nabla^k_{\dot\gamma(0)} \left((\doubleflat{\dot\gamma}) \cwedge R(\tensorslot, \dot\gamma, \tensorslot, \dot\gamma)\right)\\
&\quad= \left(\doubleflat{\dot\gamma(0)}\right) \cwedge (\nabla^k_{\dot\gamma(0)} R)(\tensorslot, \dot\gamma(0), \dot\gamma(0), \tensorslot).
\end{aligned}\]
In other words, \(X\) being \(r\)-nongeneric is equivalent to
\[
\doubleflat{X} \cwedge (\nabla^k_X R)(\tensorslot, X, \tensorslot, X) = 0 \quad \forall k \in \{0, 1, \ldots, r\}.
\]
A computation by Beem and Harris \cite[Proposition 2.2]{BeemHarris93a} tells us that this is equivalent to
\[
(\nabla^k_XR)(A, X, B, X) = 0 \quad \text{ for all } A, B \in T_pM \text{ which are \(g\)-orthogonal to } X.
\]
That this holds for all \(k \in \{0, 1, \ldots, r\}\) is by definition of \(\nongen{r+1}\) equivalent to that \({\alpha_{r+1}(X, j^{r+2}_pg) \in \nongen{r+1}}\), completing the proof.
\end{proof}

We have now defined \(\nongen{r}\) as a set and interpreted its elements in terms of nongenericity.
In the remainder of this section we will show that \(\nongen{r}\) is the union of two smooth manifolds and compute the dimensions of these manifolds.

Let
\[\TLV = \{(X, [g]) \in \oTM \fibprod{M} J^1L \mid [g](X) \neq 0\},\]
\[\LLV = \{(X, [g]) \in \oTM \fibprod{M} J^1L \mid [g](X) = 0\}.\]
These are submanifolds of \(\oTM \fibprod{M} J^1L\) with codimensions \(0\) and \(1\), respectively.
For shorter notation, let
\[\nongenneq{r} = \nongen{r} \cap \left(\TLV \fibprod{M} \algcurv^{\oplus r}\right)\!,\]
\[\nongenz{r} = \nongen{r} \cap \left(\LLV \fibprod{M} \algcurv^{\oplus r}\right)\!.\]

\begin{prop}\label{prop:manifoldness-nonlightlike}
The set \(\nongenneq{r}\) is a smooth submanifold of \(\oTM \fibprod{M} J^1L \fibprod{M} \algcurv^{\oplus r}\) with codimension \(rn(n-1)/2\).
\end{prop}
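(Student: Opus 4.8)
The plan is to realize \(\nongenneq{r}\) as the transverse preimage of a zero section, so that its codimension equals the rank of an auxiliary vector bundle. The starting observation is that over \(\TLV\), where \([g](X) \neq 0\), the \(g\)-orthogonal complement \(X^\perp \subseteq T_pM\) is an \((n-1)\)-dimensional subspace depending smoothly on \((X, [g])\); these subspaces assemble into a smooth rank-\((n-1)\) vector bundle over \(\TLV\). Moreover, since each \(R_i \in \algcurv\) satisfies the pair symmetry \(R_i(A, X, B, X) = R_i(B, X, A, X)\) (a consequence of the stated curvature symmetries), the assignment \((A, B) \mapsto R_i(A, X, B, X)\) is a \emph{symmetric} bilinear form on \(X^\perp\). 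The defining condition of \(\nongen{r}\) is exactly that these \(r\) symmetric forms vanish.

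Accordingly, I would let \(E \to \TLV\) be the vector bundle whose fiber over \((X, [g])\) is the \(r\)-fold direct sum of the space of symmetric bilinear forms on \(X^\perp\); since such forms on an \((n-1)\)-dimensional space constitute a space of dimension \(n(n-1)/2\), the rank of \(E\) is \(r n(n-1)/2\), the claimed codimension. Define the smooth bundle map
\[
\Psi \colon \TLV \fibprod{M} \algcurv^{\oplus r} \to E,
\qquad
(X, [g], R_0, \ldots, R_{r-1}) \longmapsto \big( R_i(\tensorslot, X, \tensorslot, X) \big|_{X^\perp} \big)_{i=0}^{r-1},
\]
which covers the identity on \(\TLV\) and is fiberwise linear in the \(\algcurv^{\oplus r}\) directions. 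By construction \(\nongenneq{r} = \Psi^{-1}(0_E)\). As \(\TLV\) is open in \(\oTM \fibprod{M} J^1L\), it suffices to prove that \(\Psi\) is transverse to the zero section \(0_E\).

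The crux is fiberwise surjectivity: for fixed \((X, [g])\) the linear map \(\algcurv \to \{\text{symmetric forms on } X^\perp\}\) given by \(R \mapsto R(\tensorslot, X, \tensorslot, X)\big|_{X^\perp}\) must be onto. I would establish this by an explicit Kulkarni--Nomizu construction. Writing \(\xi = X^\flat\) and \(\epsilon = [g](X) \neq 0\), for a symmetric \(2\)-tensor \(h\) one computes, for \(A, B \in X^\perp\),
\[
\big( h \cwedge (\xi \otimes \xi) \big)(A, X, B, X) = \epsilon^2\, h(A, B),
\]
because the other three Kulkarni--Nomizu terms each carry a factor \(\xi(A)\) or \(\xi(B)\), both of which vanish. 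Since \(h \cwedge (\xi \otimes \xi) \in \algcurv\), prescribing any symmetric form \(S\) on \(X^\perp\) and taking \(h\) to be any symmetric extension of \(\epsilon^{-2} S\) to \(T_pM\) yields a preimage. This surjectivity is the step I expect to be the main obstacle, and it is precisely where the value \(n(n-1)/2\) enters: the curvature symmetry restricts the target to symmetric (rather than all) bilinear forms on the \((n-1)\)-dimensional space \(X^\perp\).

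Finally, fiberwise linearity and surjectivity imply that at any point of \(\Psi^{-1}(0_E)\) the differential of \(\Psi\) already surjects onto the vertical tangent space of \(E\) through the \(\algcurv^{\oplus r}\) directions; hence \(\Psi\) is transverse to \(0_E\). The transversal preimage theorem then shows that \(\nongenneq{r} = \Psi^{-1}(0_E)\) is a smooth submanifold of \(\oTM \fibprod{M} J^1L \fibprod{M} \algcurv^{\oplus r}\) of codimension equal to \(\operatorname{rank} E = r n(n-1)/2\).
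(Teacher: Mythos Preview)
Your proof is correct and follows essentially the same approach as the paper: both realize \(\nongenneq{r}\) as the preimage of the zero section under the fiberwise-linear map sending \(R\) to the symmetric form \(R(\tensorslot, X, \tensorslot, X)\) on \(X^\perp\), and read off the codimension from the rank \(rn(n-1)/2\). The only notable difference is that you supply an explicit Kulkarni--Nomizu construction for the fiberwise surjectivity, whereas the paper simply asserts it and then invokes \autoref{lem:submersivity-lemma} to pass from fiberwise to global submersivity.
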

\begin{proof}
Since \(\TLV\) has codimension \(0\) in \(\oTM \fibprod{M} J^1L\), it is sufficient to prove that \(\nongenneq{r}\) is a submanifold of \(\TLV \fibprod{M} \algcurv^{\oplus r}\) with codimension \(rn(n-1)/2\).

We will characterize the set \(\nongenneq{r}\) as the zero set of a submersion
\[
\TLV \fibprod{M} \algcurv^{\oplus r} \to (S_2E)^{\oplus r},
\]
where \(E\) is a vector bundle over \(\TLV\) which we shall presently construct.
Let \(E \subset TM \fibprod{M} \TLV\) be the subbundle of the vector bundle \(TM \fibprod{M} \TLV \to \TLV\) defined by
\[
E
=
\{(A, X, [g]) \in TM \fibprod{M} \TLV \mid [g](A, X) = 0\}.
\]
In other words, the fiber over \((X, [g])\) consists of vectors \(A\) which are \([g]\)-orthogonal to \(X\).

Now define a vector bundle map
\begin{center}
\begin{tikzpicture}[scale=1.5]
\node (algcurv) at (0,1) {\(\TLV \fibprod{M} \algcurv\)};
\node (TLV) at (1.9,0) {\(\TLV\)};
\node (S2E) at (3,1) {\(S_2E\)};
\path[->,>=angle 90]
(algcurv) edge node[above]{\(c\)} (S2E)
(algcurv) edge node[above]{} (TLV)
(S2E) edge node[above]{} (TLV);
\end{tikzpicture}
\end{center}
by letting
\[
c(X, [g], R)
=
\left(
	(v, w) \mapsto
		R(v, X, w, X)
\right).
\]
This map is a fiberwise surjective linear map, since every symmetric \((0, 2)\)-tensor on a fiber \(E_{(X, [g])}\) is obtained as \(R(\tensorslot, X, \tensorslot, X)\) for some \(R \in \algcurv\).
(Note that \(X \notin E_{(X, [g])}\).
This is necessary for the map to be surjective.)
By combining \(r\) copies of this map, we obtain a vector bundle map
\begin{center}
\begin{tikzpicture}[scale=1.5]
\node (algcurv) at (0,1) {\(\TLV \fibprod{M} \algcurv^{\oplus r}\)};
\node (TLV) at (1.9,0) {\(\TLV\).};
\node (S2E) at (3,1) {\((S_2E)^{\oplus r}\)};
\path[->,>=angle 90]
(algcurv) edge node[above]{\(c \oplus \cdots \oplus c\)} (S2E)
(algcurv) edge node[above]{} (TLV)
(S2E) edge node[above]{} (TLV);
\end{tikzpicture}
\end{center}
The map \(c \oplus \cdots \oplus c\) is a fiberwise submersion, since each \(c\) is a fiberwise surjective linear map.
By \autoref{lem:submersivity-lemma}, this map is then a submersion of total spaces.
Hence the inverse image of the zero section in \((S_2E)^{\oplus r}\) is a submanifold of \(\TLV \fibprod{M} \algcurv^{\oplus r}\) with the same codimension as the codimension of the zero section in \((S_2E)^{\oplus r}\).
This zero section has codimension \(rn(n-1)/2\) since \(E\) has rank \(n-1\).
The set \(\nongenneq{r}\) coincides by definition with this inverse image, completing the proof.
\end{proof}

The corresponding proof for \(\nongenz{r}\) is very similar, but yields a different codimension.
\begin{prop}\label{prop:manifoldness-lightlike}
The set \(\nongenz{r}\) is a smooth submanifold of \(\oTM \fibprod{M} J^1L \fibprod{M} \algcurv^{\oplus r}\) with codimension \(r(n-1)(n-2)/2 + 1\).
\end{prop}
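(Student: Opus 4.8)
The plan is to follow the proof of \autoref{prop:manifoldness-nonlightlike} essentially verbatim, exhibiting \(\nongenz{r}\) as the preimage of a zero section under a submersion, with two modifications forced by the fact that \(X\) is now lightlike. First, since \(\LLV\) has codimension \(1\) in \(\oTM \fibprod{M} J^1L\) rather than codimension \(0\), it suffices to show that \(\nongenz{r}\) is a submanifold of \(\LLV \fibprod{M} \algcurv^{\oplus r}\) of codimension \(r(n-1)(n-2)/2\); adding the codimension \(1\) contributed by \(\LLV\) then yields the claimed total codimension \(r(n-1)(n-2)/2 + 1\).

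Second, I would replace the orthogonal-complement bundle by the \emph{screen} bundle. As before let \(E \to \LLV\) be the rank-\((n-1)\) bundle whose fiber over \((X, [g])\) is \(\{A \in T_pM \mid [g](A, X) = 0\}\). The essential new feature is that when \(X\) is lightlike we have \(X \in E_{(X,[g])}\), so \(E\) contains the line subbundle \(\langle X\rangle\) spanned by the tautological section \(X\). The antisymmetry of \(R \in \algcurv\) in its first two and in its last two arguments gives \(R(X, X, B, X) = R(A, X, X, X) = 0\), so the bilinear form \((v,w)\mapsto R(v, X, w, X)\) vanishes whenever one argument is proportional to \(X\); it therefore descends to a symmetric form on the rank-\((n-2)\) screen bundle \(E' := E/\langle X\rangle\). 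I would accordingly define the fiberwise linear map
\[
c(X, [g], R) = \bigl((v, w) \mapsto R(v, X, w, X)\bigr) \in S_2E',
\]
and observe, exactly as in the nonlightlike case, that \(\nongenz{r}\) is the preimage of the zero section under \(c \oplus \cdots \oplus c\) (using that every class in \(E'\) has a representative in \(X^\perp\), and that the condition defining \(\nongen{r}\) involves only \(A, B\) orthogonal to \(X\)).

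The step I expect to be the main obstacle is verifying that \(c\) is fiberwise surjective onto \(S_2E'\), since the argument from \autoref{prop:manifoldness-nonlightlike} breaks down: there one realizes a prescribed symmetric form as \((h \cwedge g)(\tensorslot, X, \tensorslot, X)\) for suitable \(h\), but that computation uses \([g](X, X) \neq 0\), which now fails. Instead I would proceed as follows. Given a lightlike \(X\), choose a lightlike vector \(Y\) in the same fiber with \([g](X, Y) = 1\), so that \(T_pM = \langle X\rangle \oplus \langle Y\rangle \oplus W\) with \(W = \{X, Y\}^\perp\) a model for \(E'\). Given any symmetric form \(S\) on \(W\), extend it by zero to a symmetric \((0,2)\)-tensor \(\tilde S\) on \(T_pM\) that vanishes as soon as an argument lies in \(\langle X\rangle \oplus \langle Y\rangle\), and set \(R = \tilde S \cwedge (\doubleflat{Y})\). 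Then \(R \in \algcurv\), since the Kulkarni--Nomizu product of two symmetric \(2\)-tensors always has the curvature symmetries, and for \(v, w \in W\) the only surviving term of the Kulkarni--Nomizu product is \(\tilde S(v, w)\,(\doubleflat{Y})(X, X) = S(v, w)\,[g](X, Y)^2 = S(v, w)\). Hence \(c(X, [g], R) = S\) under the identification \(E' \cong W\), establishing fiberwise surjectivity.

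With surjectivity in hand I would finish as in the nonlightlike case: \(c \oplus \cdots \oplus c\) is a fiberwise submersion of bundles over \(\LLV\) and hence, by \autoref{lem:submersivity-lemma}, a submersion of total spaces, so \(\nongenz{r}\), being the preimage of the zero section, is a submanifold whose codimension equals the rank of \((S_2E')^{\oplus r}\), namely \(r(n-1)(n-2)/2\). Combined with the reduction in the first paragraph, this gives codimension \(r(n-1)(n-2)/2 + 1\).
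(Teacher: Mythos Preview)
Your proposal is correct and follows essentially the same approach as the paper: reduce to showing codimension \(r(n-1)(n-2)/2\) inside \(\LLV \fibprod{M} \algcurv^{\oplus r}\), pass to the rank-\((n-2)\) quotient bundle \(E' = X^\perp/\R X\), define the map \(c\) into \(S_2E'\), and invoke \autoref{lem:submersivity-lemma}. Your explicit Kulkarni--Nomizu construction \(R = \tilde S \cwedge (\doubleflat{Y})\) for fiberwise surjectivity is a welcome addition, as the paper merely asserts this step.
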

\begin{proof}
Since \(\LLV\) has codimension \(1\) in \(\oTM \fibprod{M} J^1L\), it is sufficient to prove that \(\nongenz{r}\) is a submanifold of \(\LLV \fibprod{M} \algcurv^{\oplus r}\) with codimension \(r(n-1)(n-2)/2\).

We will characterize the set \(\nongenz{r}\) as the zero set of a submersion
\[
\LLV \fibprod{M} \algcurv^{\oplus r} \to (S_2E)^{\oplus r},
\]
where \(E\) is a vector bundle over \(\LLV\) which we shall presently construct.
Let \(E\) be the vector bundle defined by letting the fiber over \((X, [g]) \in \LLV\) be the quotient
\[
E_{(X, [g])}
=
\quotient{\{A \in T_pM \mid [g](A, X) = 0\}}{\R X}.
\]
(Here \(p\) is the image of \((X, [g])\) under the projection \(\LLV \to M\).)
In other words, the fiber over \((X, [g])\) consists of vectors \(A\) which are \([g]\)-orthogonal to \(X\), modulo multiples of \(X\).

Now define a vector bundle map
\begin{center}
\begin{tikzpicture}[scale=1.5]
\node (algcurv) at (0,1) {\(\LLV \fibprod{M} \algcurv\)};
\node (LLV) at (1.9,0) {\(\LLV\)};
\node (S2E) at (3,1) {\(S_2E\)};
\path[->,>=angle 90]
(algcurv) edge node[above]{\(c\)} (S2E)
(algcurv) edge node[above]{} (LLV)
(S2E) edge node[above]{} (LLV);
\end{tikzpicture}
\end{center}
by letting
\[
c(X, [g], R)
=
\left(
	\left([v], [w]\right) \mapsto
		R(v, X, w, X)
\right).
\]
This definition is independent of the choice of representatives of \([v]\) and \([w]\) since \(R(X, X, \tensorslot, X) = R(\tensorslot, X, X, X) = 0\).
The map is a fiberwise surjective linear map, since every symmetric \((0, 2)\)-tensor on a fiber \(E_{(X, [g])}\) is obtained as \(R(\tensorslot, X, \tensorslot, X)\) for some \(R \in \algcurv\).
(Note that the quotient in the definition of \(E\) is necessary for the map to be surjective.)
By combining \(r\) copies of this map, we obtain a vector bundle map
\begin{center}
\begin{tikzpicture}[scale=1.5]
\node (algcurv) at (0,1) {\(\LLV \fibprod{M} \algcurv^{\oplus r}\)};
\node (LLV) at (1.9,0) {\(\LLV\).};
\node (S2E) at (3,1) {\((S_2E)^{\oplus r}\)};
\path[->,>=angle 90]
(algcurv) edge node[above]{\(c \oplus \cdots \oplus c\)} (S2E)
(algcurv) edge node[above]{} (LLV)
(S2E) edge node[above]{} (LLV);
\end{tikzpicture}
\end{center}
The map \(c \oplus \cdots \oplus c\) is a fiberwise submersion, since each \(c\) is a fiberwise surjective linear map.
By \autoref{lem:submersivity-lemma}, this map is then a submersion of total spaces.
Hence the inverse image of the zero section in \((S_2E)^{\oplus r}\) is a submanifold of \(\LLV \fibprod{M} \algcurv^{\oplus r}\) with the same codimension as the codimension of the zero section in \((S_2E)^{\oplus r}\).
This zero section has codimension \(r(n-1)(n-2)/2\) since \(E\) has rank \(n-2\).
The set \(\nongenz{r}\) coincides by definition with this inverse image, completing the proof.
\end{proof}

\section{Transversality}\label{sec:transversality}
When \(f \colon M \to N\) is a smooth map and \(W \subseteq N\) is a smooth submanifold we use the notation \(f \pitchfork W\) to mean that \(f\) is transverse to \(W\).
This means that if \(p \in M\) is such that \(f(p) \in W\), then it holds that \(T_{f(x)}N = T_{f(x)}W + f_*(T_pM)\).
For our purposes, the most important consequence of this definition is that if \(f \pitchfork W\) and \(\dim(M) < \codim(W)\), then \(\im(f) \cap W = \emptyset\).
For details on transversality, see \cite[Chapter II]{GolubitskyGuillemin} and \cite[Chapter 3]{Hirsch}.

In this section, we will use the following transversality theorem to prove \maintheoremref.
It is a version of the Thom Transversality Theorem, which can be proved in the same way as \cite[Chapter II, Theorem 4.9]{GolubitskyGuillemin}, or by using \cite[Theorem 2.3.2]{EliashbergMishachev}.
\begin{prop}
Let \(E \to M\) and \(E' \to M\) be smooth fiber bundles.
Suppose that \(W\) is a submanifold of the fiber product \(E \fibprod{M} J^mE'\).
Let \(k \in \Z \cup \{\infty\}\) be such that \(k \geq m + 1\).
Endow \(\Gamma^\infty(E')\) with the Whitney \(C^k\)-topology and let
\[
G
=
\{\phi \in \Gamma^\infty(E') \mid (\id_E, j^m\phi \circ \pi) \pitchfork W\},
\]
where \(\pi \colon E \to M\) denotes the projection.
Then \(G\) is a residual subset of \(\Gamma^\infty(E')\).
\end{prop}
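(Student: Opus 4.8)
The plan is to adapt the standard proof of the Thom Transversality Theorem (as in \cite[Chapter II, Theorem 4.9]{GolubitskyGuillemin}), the only genuinely new feature being that the jet-extension map is replaced by the map $\Psi_\phi = (\id_E, j^m\phi \circ \pi) \colon E \to E \fibprod{M} J^mE'$, whose domain is the total space $E$ rather than $M$, and which records at each $e \in E$ the jet of $\phi$ at the base point $\pi(e)$. As always, the argument combines two ingredients: a local density statement obtained from the parametric transversality theorem (via Sard), and a globalization by countable intersection that exploits $\sigma$-compactness.

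First I would reduce the global statement to countably many local ones. Since $W$ is a submanifold it is $\sigma$-compact, so write $W = \bigcup_l L_l$ with each $L_l$ compact. Cover $M$ by countably many charts $U_i$ that simultaneously trivialize $E$ and $E'$ and admit compact sets $K_i \subseteq U_i$ whose interiors cover $M$, and cover the fiber of $E$ by compact sets $C_j$ whose interiors cover it; then the sets $K_{ij} \subseteq E|_{U_i}$ corresponding to $K_i \times C_j$ are compact and cover $E$. For each triple define
\[
G_{ijl} = \{\phi \in \Gamma^\infty(E') \mid \Psi_\phi \pitchfork W \text{ at every } e \in K_{ij} \text{ with } \Psi_\phi(e) \in L_l\}.
\]
Since the $K_{ij}$ cover $E$ and the $L_l$ cover $W$, we have $G = \bigcap_{i,j,l} G_{ijl}$, a countable intersection, so it suffices to show each $G_{ijl}$ is open and dense. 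Openness is the standard compactness argument: transversality at points of a compact set whose images lie in the compact set $L_l$ is an open condition, and because $\Psi_\phi$ and its differential involve derivatives of $\phi$ up to order $m+1$, this condition is open in the Whitney $C^k$-topology precisely because $k \geq m+1$.

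The density of $G_{ijl}$ is the heart of the matter, and is where the modified domain must be accommodated. Working in the chart $U_i$, I would fix a cutoff function $\chi$ equal to $1$ near $K_i$ and compactly supported in $U_i$, let $S$ be the finite-dimensional space of $\R^{q}$-valued polynomials of degree $\leq m$ in the base coordinates (with $q$ the fiber dimension of $E'$), and form the universal map
\[
\Phi \colon E|_{U_i} \times S \to E \fibprod{M} J^mE', \qquad \Phi(e, P) = \bigl(e, j^m_{\pi(e)}(\phi + \chi P)\bigr),
\]
which over $K_i$, where $\chi \equiv 1$, equals $\Psi_{\phi + \chi P}$. The key computation is that $\Phi$ is a submersion: varying $P$ spans the entire vertical tangent space of $J^mE' \to M$ at each base point (arbitrary $m$-jets at a point are realized by polynomials of degree $\leq m$), while varying $e$ surjects onto $T_eE$ and drags the second coordinate along the section $j^m(\phi + \chi P)$; combining these fills out the tangent space of the fiber product, in the same spirit as the submersivity arguments built on \autoref{lem:submersivity-lemma}. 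Being a submersion, $\Phi$ is transverse to $W$, so the parametric transversality theorem yields a residual---hence dense---set of $P \in S$ with $\Phi(\tensorslot, P) \pitchfork W$ on the compact set $K_{ij}$. Choosing such a $P$ with small coefficients makes $\chi P$ arbitrarily $C^k$-small with fixed compact support in $U_i$, so $\phi + \chi P$ lies in any prescribed Whitney $C^k$-neighborhood of $\phi$ and belongs to $G_{ijl}$, which proves density. Then $G = \bigcap_{i,j,l} G_{ijl}$ is a countable intersection of dense open sets, i.e.\ residual. The main obstacle is exactly this submersion computation for $\Phi$: one must verify that although the jet is anchored at the moving base point $\pi(e)$, the independent freedom to perturb the section by polynomials still renders the universal map a submersion onto the fiber product $E \fibprod{M} J^mE'$.
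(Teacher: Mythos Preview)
The paper does not actually prove this proposition: it simply states that it ``can be proved in the same way as \cite[Chapter~II, Theorem~4.9]{GolubitskyGuillemin}, or by using \cite[Theorem~2.3.2]{EliashbergMishachev}.'' Your proposal is precisely a correct execution of the first of these two options, so it is consistent with what the paper intends; you have supplied the details the paper omits. One small remark: as written, your universal map already satisfies $\Phi(e,P)=\Psi_{\phi+\chi P}(e)$ everywhere, so the clause ``over $K_i$, where $\chi\equiv 1$'' is superfluous (though harmless); what you actually need $\chi\equiv 1$ near $K_i$ for is the submersion computation, since only there does varying $P$ genuinely span the full vertical jet fiber.
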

By letting \(m = 1 + r\), \(E = \oTM\) and \(E' = L\) we obtain the following.
\begin{lem}\label{lem:transversality}
Let \(M\) be a smooth manifold and let \(\pi \colon \oTM \to M\) be the projection.
Let \(W \subseteq \oTM \fibprod{M} J^{1+r}L\) be a smooth submanifold.
Let \(G\) be the set of Lorentzian metrics \(g \in \Gamma^\infty(L)\) such that the map \(\rho_g \colon \oTM \to \oTM \fibprod{M} J^{1+r}L\) defined by \(\rho_g(X) = (X, j^{1+r}_{\pi(X)}g)\) is transverse to \(W\).
Let \(k \in \Z \cup \{\infty\}\) be such that \(k \geq r + 2\).
Then \(G\) is residual in the Whitney \(C^k\)-topology on the space \(\Gamma^\infty(L)\) of all Lorentzian metrics.
\end{lem}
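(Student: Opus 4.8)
The plan is to obtain this lemma as a direct specialization of the preceding Proposition, applied with $m = 1 + r$, $E = \oTM$ and $E' = L$. First I would check that the hypotheses of the Proposition are satisfied. Both $\oTM$ and $L$ are smooth fiber bundles over $M$; note that $\oTM$ is a fiber bundle with fiber $\R^n \setminus \{0\}$ rather than a vector bundle, but the Proposition is stated for arbitrary smooth fiber bundles, so this causes no difficulty. By hypothesis $W$ is a smooth submanifold of $\oTM \fibprod{M} J^{1+r}L = E \fibprod{M} J^m E'$, and the condition $k \geq m + 1$ demanded by the Proposition becomes $k \geq (1+r) + 1 = r + 2$, which is exactly the bound imposed here.

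The only substantive point to verify is that the concrete map $\rho_g$ coincides with the abstract map $(\id_E, j^m\phi \circ \pi)$ that appears in the Proposition when $\phi = g$. For a Lorentzian metric $g \in \Gamma^\infty(L)$, the $(1+r)$-jet $j^{1+r}g$ is a section of $J^{1+r}L \to M$, so for a vector $X \in \oTM$ we have $(j^{1+r}g \circ \pi)(X) = j^{1+r}_{\pi(X)}g$, and therefore
\[
(\id_{\oTM}, j^{1+r}g \circ \pi)(X) = (X, j^{1+r}_{\pi(X)}g) = \rho_g(X).
\]
Both components lie over the common base point $\pi(X)$, so the pair indeed lands in the fiber product $\oTM \fibprod{M} J^{1+r}L$. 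Consequently the map whose transversality to $W$ the Proposition governs is precisely $\rho_g$, and the set $G$ defined in the statement of this lemma coincides with the residual set $G$ furnished by the Proposition for the choice $E' = L$.

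The hard part is already done: all of the analytic content, namely the Baire-category argument establishing residuality in the Whitney $C^k$-topology, is contained in the Proposition, which in turn follows from the standard Thom Transversality Theorem as indicated in the references. The only step deserving a moment's care is the identification above, together with the observation that $\Gamma^\infty(E') = \Gamma^\infty(L)$ really is the space of smooth Lorentzian metrics, so that the residual set lives in the intended function space; this is immediate from the definition of $L$ as the fiber bundle of Lorentzian inner products. With these identifications in place, the conclusion that $G$ is residual in the Whitney $C^k$-topology follows at once.
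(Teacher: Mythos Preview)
Your proposal is correct and follows exactly the paper's approach: the paper derives this lemma from the preceding Proposition by the single sentence ``By letting \(m = 1 + r\), \(E = \oTM\) and \(E' = L\) we obtain the following,'' and you have simply spelled out the identifications (in particular that \(\rho_g = (\id_{\oTM}, j^{1+r}g \circ \pi)\)) that make this substitution work. There is nothing to add.
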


We can now prove that generic metrics have no \(r\)-nongeneric vectors if \(r\) is sufficiently large.
\mainthm*
\begin{proof}
\newcommand{\Wneq}{W_{\neq}}
\newcommand{\Wz}{W_{=}}
Let
\[\Wneq = \alpha_r^{-1}(\nongenneq{r}),\]
\[\Wz = \alpha_r^{-1}(\nongenz{r}).\]
Since \(\alpha_r\) is a submersion by \autoref{lem:alpha-submersion}, these sets are submanifolds with the same codimensions as \(\nongenneq{r}\) and \(\nongenz{r}\):
\[\codim(\Wneq) = \frac{rn(n-1)}{2},\]
\[\codim(\Wz) = \frac{r(n-1)(n-2)}{2} + 1.\]
By \autoref{lem:transversality} there is a set \(G \subseteq \Gamma^\infty(L)\), residual in the Whitney \(C^k\)-topology, of Lorentzian metrics \(g\) such that the map \({\rho_g \colon \oTM \to \oTM \fibprod{M} J^{1+r}L}\) defined by \(\rho_g(X) = (X, j^{1+r}_{\pi(X)}g)\) is transverse to \(\Wneq\) and \(\Wz\).
The manifold \(\oTM\) has dimension \(2n\).
We have assumed that \(n \geq 3\) and that
\[
r > \frac{4n - 2}{(n-1)(n-2)}
\]
so it holds that \(2n < \codim(\Wneq)\) and \(2n < \codim(\Wz)\).
Hence the transversality means that \(\im \rho_g\) is actually disjoint from both \(\Wneq\) and \(\Wz\).
Since \(\Wneq \cup \Wz = \alpha_r^{-1}(\nongen{r})\) we have now proved that \(\alpha_r(\im \rho_g) \cap \nongen{r} = \emptyset\), in other words that the metric \(g\) has no \(r\)-nongeneric vectors.
Since \(G\) is residual in the Whitney \(C^\infty\)-topology, it is dense in the Whitney \(C^\infty\)-topology (by \autoref{prop:baire}) and hence dense in the Whitney \(C^k\)-topology.
This completes the proof of the first part of the theorem.

Let \(g \in G\) and let \(\gamma\) be a \(g\)-geodesic.
That every non-zero vector is \(r\)-nongeneric implies that for each \(t \in \dom(\gamma)\) there is some \(k\) such that
\[
\nabla^k_{\dot\gamma(t)} \left((\doubleflat{\dot\gamma}) \cwedge R(\tensorslot, \dot\gamma, \tensorslot, \dot\gamma)\right) \neq 0.
\]
This means that the zeros of \(\left(\doubleflat{\dot\gamma}\right) \cwedge R(\tensorslot, \dot\gamma, \tensorslot, \dot\gamma)\) along \(\gamma\) form a discrete set.
In particular, each inextendible geodesic has at least one tangent vector which is generic, proving that the lightlike, timelike and spacelike generic conditions hold for \(g\).
\end{proof}
The inequality relating \(r\) and \(n\) may be weakened slightly if one instead wants to prove that the space of \(r\)-nongeneric vectors for generic metrics is \(0\)-dimensional.
This would also be sufficient to conclude that the generic condition holds for generic metrics.

The inequality can also be weakened if one is only interested in the timelike and spacelike generic conditions.

\appendix
\section{Jet bundles and Whitney topologies}\label{app:jets-and-whitney-topologies}
\subsection{Jet bundles}
As references for jets we suggest \cite[Chapter 1]{EliashbergMishachev}, \cite[Chapter II, \S2]{GolubitskyGuillemin} and \cite{Saunders}.

\begin{defn}
Let \(E \to B\) be a smooth fiber bundle. A \emph{local section} around a point \(b \in B\) is an open neighborhood \(U\) of \(b\) together with a section \(s \colon U \to \left.E\right|_U\) of the restricted fiber bundle \(\left.E\right|_U \to U\).
\end{defn}

A construction of \(k\)-jet bundles, complete with their fiber bundle structures, can be found in \cite[Chapter 6]{Saunders}.
As sets they are described as follows.
\begin{defn}
Let \(\pi \colon E \to B\) be a smooth fiber bundle, and let \(k\) be a nonnegative integer. 
The \emph{\(k\)-jet bundle} \(J^kE\) is a fiber bundle over \(B\) whose fiber \(J^k_bE\) over \(b \in B\) is the set of equivalence classes of local sections around \(b\) under the relation defined by letting \(\sigma_1 \sim \sigma_2\) if
\begin{itemize}
\item \(\sigma_1(b) = \sigma_2(b)\),
\item after choosing suitable coordinates (using a local trivialization) for \(B\) and \(E\) around \(b\) and \(\sigma_1(b) = \sigma_2(b)\), and locally viewing the sections \(\sigma_i\) as functions \(f_i\) from the trivializing neighborhoods to a fiber, the partial derivatives of \(f_1\) and \(f_2\) at \(b\) agree up to order \(k\).
\end{itemize}
We use the notation \(J^kE\) instead of the more proper \(J^k\pi\) since all jets in this paper are with respect to the same base space \(M\).
Note that \(J^0E\) can be identified with \(E\) itself.
\end{defn}

There is a natural fiber bundle map
\begin{center}
\begin{tikzpicture}[scale=1.4]
\node (Jk1E) at (0,1) {\(J^{k+1}E\)};
\node (JkE) at (2,1) {\(J^kE\)};
\node (B) at (1,0) {\(B\)};
\path[->,>=angle 90]
(Jk1E) edge node[above]{} (JkE)
(Jk1E) edge node[above]{} (B)
(JkE) edge node[above]{} (B);
\end{tikzpicture}
\end{center}
defined by discarding information about the highest-order partial derivatives. This map also allows us to view \(J^{k+1}E\) as a fiber bundle over \(J^kE\).

A section \(s\) of a fiber bundle \(E \to B\) gives rise to a section \(j^ks\) of the fiber bundle \(J^kE \to B\), for each nonnegative integer \(k\).
The section \(j^ks\) is called the \emph{prolongation} of \(s\).
Evaluating \(j^ks\) at a point \(b \in B\) gives an element \(j^k_bs \in J^k_bE\) which we call the \emph{\(k\)-jet of \(s\) at \(b\)}.

Analogously to the definition above, one can define the space \(J^k(M, N)\) of jets of arbitrary smooth maps between smooth manifolds \(M\) and \(N\).
See \cite[Chapter II, \S2]{GolubitskyGuillemin} for details.

\subsection{Whitney topologies}
We will define a family of topologies called the \emph{Whitney \(C^k\)-topologies} on function spaces \(C^\infty(M, N)\).
For details, see \cite[Chapter II, \S3]{GolubitskyGuillemin}. 
\begin{defn}
Let \(M\) and \(N\) be smooth manifolds.
Fix some nonnegative integer \(k\).
For each open subset \(U\) of \(J^k(M, N)\), let
\[
M^k(U)
=
\left\{ f \in C^\infty(M, N) \mid j^kf(M) \subseteq U \right\}.
\]

The collection
\[
\left\{ M^k(U) \mid U \subseteq J^k(M, N) \text{ is open} \right\}
\]
forms a basis for the \emph{Whitney \(C^k\)-topology} on \(C^\infty(M, N)\).

The collection
\[
\left\{ M^k(U) \mid k \geq 0 \text{ and } U \subseteq J^k(M, N) \text{ is open} \right\}\]
forms a basis for the \emph{Whitney \(C^\infty\)-topology} on \(C^\infty(M, N)\).
\end{defn}
\begin{defn}
A subset of a topological space is \emph{residual} if it is a countable intersection of open dense sets.
A topological space is a \emph{Baire space} if every residual set is dense.
\end{defn}
\begin{prop}[\protect{\cite[Chapter II, Proposition 3.3]{GolubitskyGuillemin}}]\label{prop:baire}
Let \(M\) and \(N\) be smooth manifolds.
Then \(C^\infty(M, N)\) endowed with the Whitney \(C^\infty\)-topology is a Baire space.
\end{prop}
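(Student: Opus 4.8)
The plan is to verify the defining property directly: given a countable family \(\{W_n\}_{n \in \N}\) of dense open subsets of \(C^\infty(M, N)\) and any nonempty open set \(V\), I will produce a smooth map lying in \(V \cap \bigcap_{n} W_n\), which shows that every residual set is dense. The classical Baire Category Theorem does not apply directly, since for non-compact \(M\) the Whitney \(C^\infty\)-topology is not metrizable; so I would run a hands-on Baire-type construction adapted to the structure of the Whitney basic open sets. To make that structure explicit, I would fix a countable locally finite atlas \(\{(U_i, \varphi_i)\}_{i \in \N}\) of \(M\) with each \(\overline{U_i}\) compact, together with charts on \(N\), and an exhaustion \(K_1 \subseteq K_2 \subseteq \cdots\) of \(M\) by compact sets. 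Relative to this data a neighborhood basis of a map \(f\) is given by sets of the form \(\{g : \text{on each } \overline{U_i},\ g \text{ maps into the relevant } N\text{-chart and the coordinate derivatives of } g \text{ and } f \text{ differ by less than } \varepsilon_i \text{ up to order } k_i\}\), where the pairs \((k_i, \varepsilon_i)\) may be chosen independently for each \(i\).

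First I would set up the induction. Put \(B_0 = V\). Suppose a map \(f_{n-1}\) and a basic neighborhood \(B_{n-1} \ni f_{n-1}\) have been chosen. Since \(W_n\) is dense and open, \(B_{n-1} \cap W_n\) is nonempty and open, so I may choose \(f_n \in B_{n-1} \cap W_n\) together with a basic neighborhood \(B_n \subseteq B_{n-1} \cap W_n\) of \(f_n\). I would tighten the defining data so that on \(K_n\) the controlled order is at least \(n\) and every tolerance is at most \(2^{-n}\). The crucial reserve-of-room device is this: I also pass to the sub-neighborhood \(B_n'\subseteq B_n\) with the same orders but all tolerances halved, and I demand that the whole tail of the construction stay inside \(B_n'\), i.e.\ \(f_{n+1} \in B_n'\) and \(B_{n+1} \subseteq B_n'\). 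The neighborhoods are then nested, \(B_0 \supseteq B_1 \supseteq \cdots\), the derivative orders on each compact set tend to infinity, and the tolerances tend to zero; and since a limit of quantities bounded by \(\varepsilon_i/2\) stays bounded by \(\varepsilon_i/2 < \varepsilon_i\), this reserve will force the eventual limit into \(B_n\) itself, not merely into its closure.

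Next I would extract the limit. Because the atlas is locally finite, each point of \(M\) lies in only finitely many \(\overline{U_i}\) and each \(K_m\) meets only finitely many of them. For \(m > n\) every \(f_m\) lies in \(B_n'\), so on the charts meeting \(K_n\) the maps \(f_m\) take values in a fixed \(N\)-chart and their coordinate representatives agree to order \(n\) within \(2^{-n}\); as \(n\) grows this forces \((f_m)\) to be Cauchy in every \(C^k\)-norm on every compact subset of \(M\). Standard \(C^\infty\)-completeness on compact sets then yields, chart by chart, a smooth map \(f \colon M \to N\) to which \(f_m\) converges together with all derivatives uniformly on compacta; local finiteness guarantees these local limits agree on overlaps and assemble into a single globally smooth map. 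To finish, I fix \(n\): all \(f_m\) with \(m > n\) lie in \(B_n'\), so the \(C^\infty\)-limit \(f\) differs from \(f_n\) by at most half the tolerances of \(B_n\) on the relevant charts, whence \(f \in B_n \subseteq W_n\); and \(f \in B_0 = V\) as well. As \(n\) was arbitrary, \(f \in V \cap \bigcap_n W_n\), proving the intersection dense.

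The main obstacle is precisely the non-metrizability of the Whitney \(C^\infty\)-topology on a non-compact manifold: there is no single complete metric to feed into the classical theorem, and one must control all orders of derivatives simultaneously over an exhausting sequence of compact sets while preventing the constructed limit from escaping some \(W_n\) \enquote{at infinity}. The two devices that defeat this are the local finiteness of the atlas, which makes the compact-set limits glue into one smooth map, and the halved-tolerance reserve in the inductive choice of the \(B_n\), which plays the role of the nested closed balls in the usual Baire argument and certifies that the limit lands inside each prescribed dense open set.
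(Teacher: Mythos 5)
Your proof is correct, and since the paper does not prove this proposition at all---it is quoted directly from Golubitsky--Guillemin---the relevant comparison is with the proof given there, which yours essentially reconstructs: nested basic Whitney neighborhoods chosen inside successive dense open sets, a halved-tolerance reserve playing the role of the nested closed balls, \(C^\infty\)-convergence on compacta in the weaker (completely metrizable) compact-open \(C^\infty\) topology, and direct verification from the defining inequalities that the limit lies in each \(B_n\). You also correctly identify the one genuinely delicate point: the limit \(f\) need not converge to anything in the Whitney \(C^\infty\)-topology itself (where sequential convergence forces eventual agreement off a compact set), but this is harmless because membership of \(f\) in each \(B_n \subseteq W_n\) is checked chart-by-chart against the halved tolerances rather than via topological convergence.
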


\subsection*{Acknowledgements}
I wish to thank Marc Nardmann for many helpful comments, and in particular for suggesting a more elegant way of proving Propositions~\ref{prop:manifoldness-nonlightlike} and \ref{prop:manifoldness-lightlike}.

\bibliographystyle{abbrv}
\bibliography{References}

\end{document}